\numberwithin{equation}{section}
\def\Ext{\operatorname{Ext}}
\def\ann{\operatorname{ann}}
\def\cm{\operatorname{CM}}
\def\depth{\operatorname{depth}}
\def\Ext{\operatorname{Ext}}
\def\Hom{\operatorname{Hom}}
\def\m{\mathfrak{m}}
\def\mod{\operatorname{mod}}
\def\p{\mathfrak{p}}
\def\spec{\operatorname{Spec}}
\def\syz{\mathrm{\Omega}}
\def\tr{\operatorname{tr}}
\def\@citecolor{blue}
\def\@linkcolor{blue}
\def\@urlcolor{blue}
\def\@urlcolor{blue}
\def\depth{\operatorname{depth}}
\def\Hom{\operatorname{Hom}}
\def\sup{\operatorname{sup}}
\def\uac1{\operatorname{uac1}}
\def \refl {\operatorname{Ref}}  
\theoremstyle{plain} 
\newtheorem{Theorem}{Theorem}[section]
\newtheorem{thm}[Theorem]{Theorem} 
\newtheorem{lem}[Theorem]{Lemma}
\newtheorem{prop}[Theorem]{Proposition}
\newtheorem{cor}[Theorem]{Corollary}
\theoremstyle{definition} 
\newtheorem{chunk}[Theorem]{\hspace*{-1.065ex}\bf}
\newtheorem{rem}[Theorem]{Remark}
\let\epsilon\varepsilon
\let\phi=\varphi
\let\kappa=\varkappa
\def \c {\mathfrak c}
\def \grade {\operatorname{grade}}
\def \Ass {\operatorname{Ass}}  
\def \Syz {\operatorname {Syz}}
\title{finite birational extension with stable conductor}
\author{Souvik Dey}  
\address{Souvik Dey: Department of Mathematics \\ University of Kansas\\405 Snow Hall, 1460 Jayhawk Blvd.\\ Lawrence, KS 66045, U.S.A.}
\email{souvik@ku.edu}  
\keywords{Cohen--Macaulay ring, trace ideal, reflexive module, maximal Cohen--Macaulay module, finite birational extension}   
\begin{document}

\begin{abstract} Let $S$ be a module finite birational extension of a $1$-dimensional local Cohen--Macaulay ring $R$. When is the conductor of $S$ in $R$ a stable ideal? If $R$ is also generically Gorenstein, then we show that the conductor of $S$ in $R$ is a stable ideal, and $S$ is a reflexive $R$-module if and only if $\syz \cm(S)=\cm(S)\cap \syz \cm(R)$.             
\end{abstract}  

\maketitle

\section{Introduction} Let $I$ be an ideal of a commutative Noetherian ring $R$. $I$ is called a regular ideal if $I$ contains a non-zero-divisor of $R$. A regular ideal $I$ is stable $I\cong \Hom_R(I,I)$ holds, or equivalently, if $I\cong (I:_{Q(R)}I)$ holds \cite[Section 3, Proposition 3.2]{dl}, where $Q(R)$ is the total ring of fractions of $R$.      

Given a $1$-dimensional local Cohen--Macaulay ring $R$  whose integral closure $\overline R$ (in the total ring of fractions $Q(R)$ of $R$)  is module finite over $R$, it is well-known that the conductor of $\overline R$, i.e., $\c=(R:_{Q(R)} \overline  R)$  is a regular stable ideal of $R$. Motivated by this, we raise the question: Let $S$ be a module finite extension of a $1$-dimensional local Cohen--Macaulay ring $R$ such that $R\subseteq S\subseteq Q(R)$. When is the conductor $\c_R(S):=(R:_{Q(R)} S)$ of $S$ in $R$ a stable ideal of $R$?  

When $R$ is generically Gorenstein, and $S$ is moreover a reflexive $R$-module, we characterize such finite birational extensions in terms of a special description of their category of maximal Cohen--Macaulay modules and reflexive modules. Namely, we show

\begin{thm}\label{main1} Let $R$ be a generically Gorenstein local Cohen--Macaulay ring of dimension $1$ with total ring of fractions  $Q(R)$. Let $R\subseteq S\subseteq Q(R)$ be a module finite birational extension. Then, the following are equivalent  

\begin{enumerate}[\rm(1)]
\item $S \in \refl(R)$, and the conductor $\c_R(S)=(R:_{Q(R)} S)$ is a stable ideal.    

\item $S\cong \Hom_R(S,R)$.   

\item $\refl(S)=\cm(S)\cap \refl(R)$.     

\end{enumerate}

\end{thm}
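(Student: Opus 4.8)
The plan is to recast the three conditions in terms of the regular fractional ideals $R\subseteq\c\subseteq S\subseteq Q(R)$ (write $\c=\c_R(S)=(R:_{Q(R)}S)$), reduce $(1)\Leftrightarrow(2)$ to a short manipulation with colon ideals, and isolate the genuinely hard content in $(3)\Rightarrow(1)$. Throughout I use that for a regular fractional ideal $I$ one has $\Hom_R(I,R)\cong(R:_{Q(R)}I)$ (extend an $R$-map to the $Q(R)$-linear map it induces, which is multiplication by an element of $(R:_{Q(R)}I)$), and that the biduality map $I\to I^{\ast\ast}$ is, under this identification, the inclusion $I\subseteq(R:_{Q(R)}(R:_{Q(R)}I))$. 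In particular $\Hom_R(S,R)\cong\c$, so condition $(2)$ says exactly $S\cong\c$; and an $R$-isomorphism between two regular fractional ideals is multiplication by a unit of $Q(R)$, so $(2)$ is equivalent to $\c=qS$ for some unit $q$.

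The key preliminary step is the identity $(R:_{Q(R)}\c)=(\c:_{Q(R)}\c)=:T$; that is, $\c$ is a trace ideal. I would prove this from the Hom--tensor adjunction: since $\c$ is an $S$-module and $\c S=\c$, one has $\Hom_R(\c,R)\cong\Hom_S(\c,\Hom_R(S,R))\cong\Hom_S(\c,\c)$, and tracing the identification through shows the two colon ideals coincide as submodules of $Q(R)$. Thus $T=\End_R(\c)$ is a ring, $T=S^{\ast\ast}$ is the reflexive hull of $S$, and I may rephrase: \emph{$S$ is reflexive iff $S=T$}, and \emph{$\c$ is stable iff $\c\cong T$}. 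Now $(1)\Leftrightarrow(2)$ is immediate. For $(1)\Rightarrow(2)$: $S=T$ and $\c\cong T=S$ give $S\cong\c\cong\Hom_R(S,R)$. For $(2)\Rightarrow(1)$: writing $\c=qS$ gives $(\c:_{Q(R)}\c)=(S:_{Q(R)}S)=S$ because $S$ is a ring, so $T=S$ and $S$ is reflexive; and $\c=qS\cong S=T$, so $\c$ is stable.

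For $(2)\Rightarrow(3)$ I would use the natural isomorphism $\Hom_R(M,R)\cong\Hom_S(M,\c)$ for every finitely generated $S$-module $M$. Under $(2)$ we have $\c\cong S$ as $S$-modules, so this becomes a natural isomorphism $\Hom_R(-,R)\cong\Hom_S(-,S)$ on $S$-modules, compatible with the respective biduality maps. Hence for $S$-modules, reflexivity over $R$ and over $S$ coincide; since a reflexive module is torsion-free, hence maximal Cohen--Macaulay over the one-dimensional ring $S$, and since torsion is computed in the common $Q(R)=Q(S)$, one gets $\refl(S)=\{M\in\cm(S):M\in\refl(R)\}=\cm(S)\cap\refl(R)$.

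The real work is $(3)\Rightarrow(1)$. Taking $M=S$ in $(3)$ (which lies in $\refl(S)$ trivially) forces $S\in\refl(R)$, i.e.\ $S=T$, so $S$ is reflexive; this half is painless. The hard part is extracting \emph{stability of $\c$}, equivalently $\c\cong S$: mere $S$-reflexivity of $\c$ (which $(3)$ does give, via $M=\c\in\cm(S)\cap\refl(R)$) is \emph{not} enough, since over a non-Gorenstein $S$ there are reflexive non-invertible ideals with $\End_S(\c)=S$. The plan is to use the full categorical equality. Since $S$ inherits generic Gorensteinness from $R$ (they are birational, with the same localizations at minimal primes), $\Hom_S(-,S)$ is a duality on $\refl(S)$; and by the adjunction above together with $(3)$, the functor $\Hom_S(-,\c)\cong\Hom_R(-,R)$ also preserves $\refl(S)$ and is a duality there. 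One thus has two dualities on $\refl(S)$, with ``dualizing objects'' $S$ and $\c$, and the covariant auto-equivalence $F=\Hom_S(\Hom_S(-,S),\c)$ satisfies $F(S)\cong\c$. I expect $F$ to be isomorphic to $-\otimes_S\c$, so that $-\otimes_S\c$ is an auto-equivalence of $\refl(S)$, which forces $\c$ to be invertible, hence (as $S$ is local) principal, hence $\c\cong S$. Justifying rigorously that such an auto-equivalence forces invertibility of $\c$ — a Picard-group / uniqueness-of-dualizing-object argument, and the precise point where generic Gorensteinness is indispensable — is where I anticipate the main difficulty, and I would spend most of the effort making this step precise (for instance by localizing and completing and comparing $\c$ with a canonical module of $S$).
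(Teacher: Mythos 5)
Your reductions for $(1)\Leftrightarrow(2)$ and for $(2)\Rightarrow(3)$ are essentially correct. The identity $(R:_{Q(R)}\c)=(\c:_{Q(R)}\c)=S^{**}=:T$ and the resulting dictionary ($S$ reflexive iff $S=T$; $\c$ stable iff $\c\cong T$) is the same mechanism the paper uses for these implications, and your route to $(3)$ via the natural isomorphism $\Hom_R(-,R)\cong\Hom_S(-,\c)\cong\Hom_S(-,S)$ on $S$-modules is a clean alternative to the paper's pushout and presentation-dualizing arguments.

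The genuine gap is in $(3)\Rightarrow(1)$, and it stems from a false premise. You assert that knowing $\c\in\refl(S)$ and $(\c:\c)=S$ cannot suffice to get $\c\cong S$, ``since over a non-Gorenstein $S$ there are reflexive non-invertible ideals with $\End_S(\c)=S$.'' Over a one-dimensional Cohen--Macaulay semilocal ring no such ideals exist, and this is exactly the paper's key lemma (Proposition \ref{trmain} together with Corollary \ref{maincor}): if $I$ is a regular reflexive ideal of $A$, then $I=(A:(A:I))$ gives $(I:I)=\bigl((A:(A:I)):I\bigr)=(A:(A:I)I)=(A:\tr_A(I))$, so $(I:I)=A$ forces $\Ext^1_A(A/\tr_A(I),A)=0$, i.e.\ $\grade_A(\tr_A(I))\ge 2$, which when all local depths are at most $1$ is possible only if $\tr_A(I)=A$; then $I$ is locally a free direct summand of itself, hence locally free of rank one, hence free over the semilocal ring $A$. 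Applied to $A=S$ and $I=\c$ --- and you have already extracted from (3) that $S=T$, that $\c\in\cm(S)\cap\refl(R)=\refl(S)$, and that $(\c:\c)=S^{**}=S$ --- this finishes the implication in one step. Your proposed substitute (showing that $\Hom_S(\Hom_S(-,S),\c)\cong -\otimes_S\c$ is an auto-equivalence of $\refl(S)$ and deducing invertibility of $\c$) is not carried out and is doubtful as stated: the natural map $M\otimes_S\c\to\Hom_S(\Hom_S(M,S),\c)$ need not be an isomorphism for non-free reflexive $M$, and even granting the auto-equivalence you give no argument that it forces $\c$ to be invertible. So the hard implication remains unproved in your write-up; the missing ingredient is the trace-ideal and grade computation above.
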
 

Here, a commutative Noetherian ring is called generically Gorenstein if its localizations at all associated prime ideals are Gorenstein, and for a ring $A$, $\cm(A), \refl(A)$ denote the categories of maximal Cohen--Macaulay and reflexive (finitely generated) $A$-modules respectively.    

Theorem \ref{main1} is proved in Section \ref{ms} (Theorem \ref{main11}). For this, some preliminary notions and results are recalled and proved in Section \ref{ps}.  

\section{preliminaries}\label{ps} 

Throughout, all rings are commutative and Noetherian. All modules are assumed to be finitely generated. 

Given a ring $A$ with total ring of fractions $Q(A)$, and $A$-submodules $M,N$ of $Q(A)$, we write $(N:M)$ to denote the $A$-submodule $\{a\in Q(A): aM\subseteq N\}$ of $Q(A)$. An $A$-submodule $M$ of $Q(A)$ is called \textit{regular} if it contains a non-zero-divisor of $A$. Given $A$-submodules $M,N$ of $Q(A)$, with $M$ regular, $(N:M)$ can be naturally identified with $\Hom_A(M,N)$, see \cite[Propostion 2.4(1)]{tk}. We call a ring extension $A\subseteq B$ \textit{birationa} if $B\subseteq Q(A)$,i.e, $Q(A)=Q(B)$. A birational extension $A\subseteq B$ is called a \textit{finite birational} extension if $B$ is module finite over $A$. If $A\subseteq B$ is a birational extension and $M,N$ are $B$-modules such that $N$ is torsion-free, then $\Hom_A(M,N)=\Hom_B(M,N)$ (see \cite[4.14(i)]{lw} and its proof).    

It is straightforward to verify that given a birational extension $A\subseteq B$, the conductor $\c_A(B)=(A:B)$ ideal is a regular ideal if and only if $B$ is module finite over $A$. 
It is clear that $\c_A(B)$ is an ideal of both $B$ and $A$. Since $\c_A(B)$ is an ideal, it is torsion-free, hence $\Hom_A(\c_A(B),\c_A(B))=\Hom_B(\c_A(B),\c_A(B))$. Thus, $\c_A(B)$ is a stable ideal of $A$ if and only if it is a stable ideal of $B$.          

\begin{chunk}\label{syzcm} We denote the category of $A$-modules, maximal Cohen--Macaulay (MCM for short) $A$-modules, and reflexive $A$-modules by $\mod A$, $\cm(A)$, and $\refl(A)$ respectively.    
Given a subcategory $\mathcal X$ of $\mod A$, and integer $n\ge 1$, we denote by $\syz^n(\mathcal X)$ the subcategory of all $A$-modules $M$ for which there exists an exact sequence $0\to M \to P_{n-1}\to \cdots \to P_0\to N \to 0$, where each $P_i$ is a projective $A$-module and $N\in \mathcal X$. We denote $\syz^n(\mod A)$ by just $\Syz_n(A)$ as well. If $M\in \refl(A)$, then taking a free presentation of $\Hom_A(M,A)$ and dualizing by $A$, it is clear that there exists an exact sequence of $A$-modules $0\to M \to A^{\oplus a} \to A^{\oplus b}$ for some integers $a,b \ge 1$. Hence, $\refl(A)\subseteq \Syz_2(A)$. 

If $M\in \syz \cm(A)$, then there exists an exact sequence of $A$-modules $0\to M \to P \to N \to 0$ where $N\in \cm(A)$ and $P$ is a projective $A$-module. Now $P\oplus P'\cong A^{\oplus n}$ for some projective  $A$-module $P'$ (hence $P'\in \cm(A)$). Hence, we get an exact sequence $0\to M \to A^{\oplus n} \to N \oplus P' \to 0$, where $N\oplus P' \in \cm(A)$.  
\end{chunk}

\begin{chunk}\label{refr} Let $M$ be an $A$-submodule of $Q(A)$ containing a non-zero-divisor. Then, $M$ is reflexive if and only if $A:(A:M)=M$, see \cite[Proposition 2.4(4)]{tk}. 

\end{chunk}

\if0
\begin{prop}  Let $R\subseteq S$ be a module finite birational extension. Then, the following are equivalent     

\begin{enumerate}[\rm(1)]
\item $S\in \syz \cm(R)$. 

\item $\syz \cm(S)\subseteq \syz \cm(R)$.    
\end{enumerate}
\end{prop}

\begin{proof} $(2)\implies (1)$: $S\in \syz \cm(S)\subseteq \syz \cm(R)$. 

$(1)\implies (2)$: Let $M\in \syz \cm(S)$. Then, we have an exact sequence of $S$-modules (hence also of $R$-modules) $0\to M \to S^{\oplus a} \to N\to 0$, where $N\in \cm(S)\subseteq \cm(R)$. 

\end{proof}
\fi

\begin{chunk}\label{mat} Let $A$ be a generically Gorenstein $1$-dimensional Cohen--Macaulay ring. Then, by \cite[Theorem 2.3$(2)\iff (7)$]{matsui} we have $\cm(A)=\Syz_1(A)$. Hence, $\syz \cm(A)=\Syz_2(A)=\refl(A)$, where the last equality is by \cite[Theorem 2.3$(2)\iff (6)$]{matsui}.  
\end{chunk}

If $R$ is a $1$-dimensional Cohen--Macaulay ring and $S$ is a finite birational extension of $R$, then $S$ is also $1$-dimensional and Cohen--Macaulay and so consequently, $\cm(S)=\text{torsion-free } S \text{-modules} \subseteq \text{torsion-free } R \text{-modules}=\cm(R)$. Moreover, $\prod_{\p \in \Ass(R)} R_{\p} \cong Q(R)=Q(S)\cong \prod_{Q\in \Ass(S)} S_{Q}$; hence if $R$ is generically Gorenstein, then so is $S$.

For a module $M$ over a ring $R$, we denote by $\tr_R(M)$ the trace ideal of $M$ (\cite{lin}, \cite[Definition 2.2]{tk}). If $M$ is an $R$-submodule of $Q(R)$ containing a non-zero-divisor, then $\tr_R(M)=(R:M)M$, see \cite[Proposition 2.4(2)]{tk}.    

\begin{chunk}\label{trco} Let $R\subseteq S$ be a finite birational extension of rings. Then, it follows from \cite[Theorem 2.9]{dms} that $\cm(S)\cap \refl(R)=\{M\in \refl(R): \tr_{R}(M)\subseteq \c_R(S)\}$.       
\end{chunk}     

\begin{prop}\label{pre} Let $R$ be a generically Gorenstein $1$-dimensional Cohen--Macaulay ring, and let $R\subseteq S$ be a module finite birational extension. Then, the following are equivalent     

\begin{enumerate}[\rm(1)]
\item $S\in \refl(R)$. 

\item $\syz \cm(S)\subseteq \syz \cm(R)$.

\item $\refl(S)\subseteq \refl(R)$. 

\item  $\refl(S)\subseteq \{M\in \refl(R): \tr_{R}(M)\subseteq \c_R(S)\}$.  
\end{enumerate}
\end{prop}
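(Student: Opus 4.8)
The plan is to reduce everything to the identifications furnished by \ref{mat}. Since $R$ is generically Gorenstein, one-dimensional and Cohen--Macaulay, \ref{mat} gives $\syz\cm(R)=\refl(R)$; and since $S$ is a finite birational extension of $R$ it is again one-dimensional Cohen--Macaulay and generically Gorenstein (as recorded after \ref{mat}), so \ref{mat} applied to $S$ gives $\syz\cm(S)=\refl(S)$. With these two equalities in hand, condition (2) is literally the same statement as condition (3), so the equivalence $(2)\Leftrightarrow(3)$ requires no further argument.

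Next I would settle $(3)\Leftrightarrow(4)$ and the easy direction $(3)\Rightarrow(1)$. By \ref{trco}, the set on the right-hand side of (4) is exactly $\cm(S)\cap\refl(R)$, so (4) reads $\refl(S)\subseteq\cm(S)\cap\refl(R)$. Every reflexive $S$-module is torsion-free over $S$, hence maximal Cohen--Macaulay over the one-dimensional Cohen--Macaulay ring $S$; that is, $\refl(S)\subseteq\cm(S)$ always holds. Consequently $\refl(S)\subseteq\cm(S)\cap\refl(R)$ holds if and only if $\refl(S)\subseteq\refl(R)$, which is precisely $(3)\Leftrightarrow(4)$. The implication $(3)\Rightarrow(1)$ is then immediate, since $S$ is a free, hence reflexive, $S$-module, so $S\in\refl(S)\subseteq\refl(R)$.

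The substantive point is $(1)\Rightarrow(2)$, and this is where I expect the real work to lie. Assume $S\in\refl(R)=\syz\cm(R)$ and let $M\in\syz\cm(S)$. Using the free-resolution form of membership in $\syz\cm$ recorded in the second paragraph of \ref{syzcm}, I would choose an exact sequence of $S$-modules $0\to M\to S^{\oplus a}\to N\to 0$ with $N\in\cm(S)$, and an exact sequence of $R$-modules $0\to S\to R^{\oplus n}\to C\to 0$ with $C\in\cm(R)$; taking $a$ copies of the latter gives $0\to S^{\oplus a}\to R^{\oplus na}\to C^{\oplus a}\to 0$. Viewing all of these as sequences of $R$-modules, the nested inclusions $M\subseteq S^{\oplus a}\subseteq R^{\oplus na}$ produce a short exact sequence of cokernels $0\to N\to R^{\oplus na}/M\to C^{\oplus a}\to 0$. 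Because $N$ and $C^{\oplus a}$ are maximal Cohen--Macaulay over the one-dimensional ring $R$, the depth lemma applied to this sequence forces $\depth_R(R^{\oplus na}/M)\ge 1$, hence (as the dimension is one) $R^{\oplus na}/M\in\cm(R)$; then $0\to M\to R^{\oplus na}\to R^{\oplus na}/M\to 0$ exhibits $M\in\syz\cm(R)$, closing the loop. The only delicate step is checking that this middle cokernel is again maximal Cohen--Macaulay, which is exactly the depth-lemma computation, together with the routine identification of the cokernel sequence from the inclusions and the trivial edge cases in which $N$, $C^{\oplus a}$, or the quotient vanishes.
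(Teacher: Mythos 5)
Your proposal is correct and follows essentially the same route as the paper: the equivalences $(2)\Leftrightarrow(3)\Leftrightarrow(4)$ via \ref{mat} and \ref{trco}, the easy implication back to (1), and for $(1)\Rightarrow(2)$ the splice of $0\to M\to S^{\oplus a}\to N\to 0$ with $0\to S^{\oplus a}\to R^{\oplus na}\to C^{\oplus a}\to 0$. Your cokernel sequence $0\to N\to R^{\oplus na}/M\to C^{\oplus a}\to 0$ is exactly the pushout diagram the paper draws, with the depth lemma playing the role of the paper's observation that an extension of maximal Cohen--Macaulay modules is maximal Cohen--Macaulay.
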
  

\begin{proof} $(2)\implies (1)$: Follows from $S\in \syz \cm(S)\subseteq \syz \cm(R)=\refl(R)$ (see \ref{mat}).    

$(1)\implies (2)$: Let $M\in \syz \cm(S)$. Then, by the discussion at the end of \ref{syzcm}, we have an exact sequence of $S$-modules (hence also of $R$-modules) $0\to M \to S^{\oplus a} \to N\to 0$, where $N\in \cm(S)\subseteq \cm(R)$. As $S\in \refl(R) =\syz \cm(R)$, so $0\to S \to R^{\oplus b} \to X \to 0$ for some $X\in \cm(R)$. So we have the following push-out diagram of $R$-modules

\begin{tikzcd}
            &                       & 0 \arrow[d]                       &                       &   \\
0 \arrow[r] &  M \arrow[r]          &  S^{\oplus a} \arrow[r] \arrow[d] & N \arrow[d] \arrow[r] & 0 \\
0 \arrow[r] & M \arrow[r] \arrow[u,equal] & R^{\oplus ab} \arrow[d] \arrow[r] & Y \arrow[r] \arrow[d] & 0 \\
            &                       & X^{\oplus a} \arrow[d]            & X^{\oplus a} \arrow[d] \arrow[l,equal] &   \\
            &                       & 0                                 & 0                     &  
\end{tikzcd}

Since $N,X^{\oplus a}\in \cm(R)$, so $Y\in \cm(R)$, so $M\in \syz \cm(R)$.  

$(2)\iff (3)$: Follows from \ref{mat}.  

$(3)\iff (4)$: Follows from  $\refl(S)\subseteq \cm(S)$ (as $\dim S=1$)  and \ref{trco}.     
\end{proof}

We notice one last general observation that we will use in the next section 

\begin{chunk}\label{sim} Let $I$ be an ideal of a ring $R$ such that $R$ is a direct summand of $I$. Then, $I\cong R$. Indeed, by assumption, there exists an $R$-module $N$ and an $R$-linear isomorphism $f:R \oplus N\to I$. it is enough to show that $N=0$. So let $x\in N$. Let $r:=f(0,x)$ and $s:=f(1,0)$. Then, $r,s\in R$, so $f(0,sx)=sf(0,x)=sr=rs=rf(1,0)=f(r,0)$. As $f$ is an isomorphism, we get $(0,sx)=(r,0)$. Hence $r=0$ i.e. $f(0,x)=0$. Hence $f$ is an isomorphism implies $x=0$. As $x\in N$ was arbitrary, we are done.     
\end{chunk}   

\section{main result}\label{ms}
To prove the main result Theorem \ref{main1}, we need some preparatory results regarding trace ideals. First, we record a general lemma 

\begin{lem}\label{2} Let $I$ be an ideal of a ring $A$  containing a non-zero-divisor of $A$. Then, $\Ext^1_A(A/I,A)\cong \dfrac{(A:I)}{A}$.  Consequently, $\Ext^1_A(A/I,A)=0$ if and only if $(A:I)=A$.    
\end{lem}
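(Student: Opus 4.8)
The plan is to apply the contravariant functor $\Hom_A(-,A)$ to the tautological short exact sequence
\[
0 \to I \to A \to A/I \to 0
\]
and read the claim off the resulting long exact sequence. The first move is to record the two facts that make this sequence collapse. Since $A$ is free, $\Ext^1_A(A,A)=0$. And since $I$ contains a non-zero-divisor, a homomorphism $A/I\to A$ is determined by an element of $\{a\in A: aI=0\}=(0:_A I)$, which is zero; hence $\Hom_A(A/I,A)=0$. With these two vanishings the long exact sequence reduces to
\[
0 \to \Hom_A(A,A) \to \Hom_A(I,A) \to \Ext^1_A(A/I,A) \to 0 .
\]

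The next step is to translate the first two terms into $A$-submodules of $Q(A)$. Canonically $\Hom_A(A,A)\cong A$, and because $I$ is regular the identification recalled in the preliminaries (from \cite[Proposition 2.4(1)]{tk}) gives $\Hom_A(I,A)\cong (A:I)$, where an element $a\in Q(A)$ with $aI\subseteq A$ corresponds to the homomorphism ``multiply by $a$''. The point that needs checking is that, under these identifications, the middle map, which is restriction along $I\hookrightarrow A$, becomes exactly the natural inclusion $A\hookrightarrow (A:I)$ (note $A\subseteq(A:I)$ since $I$ is an ideal): a homomorphism $A\to A$ is multiplication by $a:=\phi(1)\in A$, and its restriction to $I$ is again multiplication by that same $a$, now viewed as an element of $(A:I)$. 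Exactness of the displayed sequence then identifies the cokernel $(A:I)/A$ with $\Ext^1_A(A/I,A)$, which is the asserted isomorphism.

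Finally, the ``consequently'' clause is immediate: $\Ext^1_A(A/I,A)\cong (A:I)/A$ vanishes precisely when $(A:I)=A$.

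I expect the only part requiring genuine care to be the middle paragraph, namely confirming that the restriction map is the honest inclusion $A\hookrightarrow(A:I)$ rather than some twisted map, and keeping track of where the hypothesis that $I$ contains a non-zero-divisor is used, namely once to force $\Hom_A(A/I,A)=0$ and once to legitimize the identification $\Hom_A(I,A)\cong(A:I)$. Everything else is a formal consequence of the long exact sequence of $\Hom$.
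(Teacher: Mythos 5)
Your proposal is correct and follows essentially the same route as the paper: dualize the sequence $0\to I\to A\to A/I\to 0$, kill $\Hom_A(A/I,A)$ using the non-zero-divisor in $I$, identify $\Hom_A(I,A)$ with $(A:I)$ via \cite[Proposition 2.4(1)]{tk}, and check that the restriction map becomes the inclusion $A\hookrightarrow(A:I)$. The paper verifies this last compatibility with an explicit commutative diagram, exactly the point you flag as the one needing care.
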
 

\begin{proof} Consider the natural exact sequence $0\to I \xrightarrow{i} A \to A/I \to 0$. Applying $\Hom_A(-,A)$  we get exact sequence $0\to \Hom_A(A/I,A)\to \Hom_A(A,A) \xrightarrow{i^*} \Hom_A(I,A)\to \Ext^1_A(A/I,A)\to \Ext^1_A(A,A)=0$. Now, $\Hom_A(A/I,I)$ is killed by $I$ hence killed by a non-zero-divisor, so $\Hom_A(A/I,A)$ is a torsion $A$-module. But also, $\Hom_A(A/I,A)$ embeds in $\Hom_A(A,A)\cong A$, hence $\Hom_A(A/I,A)$ is also a torsion-free $A$-module. Thus $\Hom_A(A/I,J)=0$. Hence we get exact sequence    

 $$0\to  \Hom_A(A,A) \xrightarrow{i^*} \Hom_A(I,A)\to \Ext^1_A(A/I,A)\to 0$$  

Also, we have the following commutative diagram 

$$
\begin{tikzcd}
{\text{Hom}_A(A,A)} \arrow[r, "f\mapsto f|_I"]     & {\text{Hom}_A(I,A)}                     \\
A \arrow[u, "y\to\{z\mapsto yz\}"] \arrow[r, "i"'] & (A:I) \arrow[u, "x\to\{m\mapsto xm\}"']
\end{tikzcd}$$     

where the vertical arrows are isomorphisms and the lower row arrow is the natural inclusion. Hence we get the following exact sequence $0\to A \xrightarrow{i} (A:I) \to \Ext^1_A(A/I,A)\to 0$, and the isomorphism follows.  Consequently, $\Ext^1_A(A/I,A)=0$ if and only if $A=(A:I)$.  
\end{proof}

For an ideal $I$ of a ring $A$, $\grade_A(I)$ denotes the length of a maximal $A$-regular sequence of $A$. It is well-known (see \cite{bh}) that $\grade_A(I)=\inf\{i:\Ext^i_A(A/I,A)\ne 0\}=\inf\{\depth A_{\p}: \p\in V(I)\}\le \sup\{\depth A_{\m} : \m \in \text{Max} \spec(A)\}$.    

\begin{prop}\label{trmain} Let $I$ be a reflexive ideal of a ring $A$. Then, the following are equivalent

\begin{enumerate}[\rm(1)]
    \item $\grade_A(\tr_A(I))\ge 2$.
    
    \item $I$ contains a non-zero-divisor and $(I:I)=A$.  
\end{enumerate}

\end{prop}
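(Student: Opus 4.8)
The plan is to first rewrite condition (1) purely in terms of ideal quotients. Setting $J=\tr_A(I)$ and using $\grade_A(J)=\inf\{i:\Ext^i_A(A/J,A)\neq 0\}$, the inequality $\grade_A(J)\ge 2$ amounts to the simultaneous vanishing of $\Hom_A(A/J,A)$ and $\Ext^1_A(A/J,A)$. The first vanishing says precisely that $J$ is a regular ideal (it is the statement $(0:_A J)=0$), and, granting regularity of $J$, Lemma~\ref{2} rephrases the second as $(A:J)=A$. Hence I would record the equivalence of (1) with the conjunction ``$J=\tr_A(I)$ is regular and $(A:J)=A$.''

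The next step is the computation that drives the whole proposition: for a regular reflexive ideal $I$ one has $(A:\tr_A(I))=(I:I)$. Indeed, regularity gives $\tr_A(I)=(A:I)I$, and then the adjunction $(A:(A:I)I)=((A:(A:I)):I)$ combined with the reflexivity criterion $A:(A:I)=I$ from \ref{refr} yields $(A:J)=(I:I)$. Granting for the moment that $I$ and $J$ are regular simultaneously, this turns the condition ``$(A:J)=A$'' into ``$(I:I)=A$'', so (1) becomes ``$I$ regular and $(I:I)=A$,'' which is exactly (2).

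What remains --- and what I expect to be the only real obstacle --- is to see that $I$ is regular if and only if $J=\tr_A(I)$ is regular. One direction is immediate: if $I$ is regular then the inclusion $I\hookrightarrow A$, viewed as an element of $\Hom_A(I,A)$, shows $I\subseteq \tr_A(I)=J$, so $J$ is regular too; this covers $(2)\Rightarrow(1)$. For the converse, needed in $(1)\Rightarrow(2)$, I plan to argue by localizing at associated primes. If $I$ is not regular then $I\subseteq\p$ for some $\p\in\Ass(A)$, and since trace commutes with localization, $J_\p=\tr_{A_\p}(I_\p)$. Here $I_\p$ is a proper ideal of the local ring $A_\p$, which has $\depth A_\p=0$, so every element of its maximal ideal is a zero-divisor. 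Were $\tr_{A_\p}(I_\p)=A_\p$, then over the local ring $A_\p$ some homomorphism $I_\p\to A_\p$ would be surjective, hence split, making $A_\p$ a direct summand of $I_\p$; by \ref{sim} this forces $I_\p\cong A_\p$, so $I_\p$ would contain a non-zero-divisor of $A_\p$ --- impossible. Thus $J_\p\neq A_\p$, i.e.\ $J\subseteq\p$, so $J$ is not regular. Contrapositively, regularity of $J$ forces regularity of $I$, which together with the computation of the previous paragraph closes the remaining gap and completes the proof.
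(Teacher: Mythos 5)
Your proof is correct, and its backbone is the same as the paper's: both rephrase $\grade_A(\tr_A(I))\ge 2$ via Lemma~\ref{2} as ``$\tr_A(I)$ is regular and $(A:\tr_A(I))=A$'', and both run the colon computation $(A:\tr_A(I))=(A:(A:I)I)=\left((A:(A:I)):I\right)=(I:I)$ using the reflexivity criterion of \ref{refr}. The one genuine divergence is the step, needed for $(1)\Rightarrow(2)$, that regularity of $\tr_A(I)$ forces regularity of $I$. The paper argues globally: from the surjections $A^{\oplus n}\to\Hom_A(I,A)$ and $I\otimes_A\Hom_A(I,A)\to\tr_A(I)$ it gets a surjection $I^{\oplus n}\to\tr_A(I)$, hence $\ann_A(I)\subseteq\ann_A(\tr_A(I))=0$, and then finishes by prime avoidance. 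You instead localize at an associated prime $\p$ containing $I$ and rule out $\tr_{A_\p}(I_\p)=A_\p$ by the splitting argument together with \ref{sim}, since a proper ideal of the depth-zero local ring $A_\p$ cannot be free of rank one. Both are sound; the paper's version is self-contained, whereas yours needs the compatibility of trace with localization (true for finitely generated modules over Noetherian rings, but you should cite it, e.g.\ \cite{lin}) and reuses the summand machinery the paper only deploys later in Corollary~\ref{maincor}. A small point in your favor: your observation that $I\subseteq\tr_A(I)$ via the inclusion $I\hookrightarrow A$ makes explicit the regularity of $\tr_A(I)$ in the direction $(2)\Rightarrow(1)$, which the paper leaves implicit when it invokes Lemma~\ref{2} there.
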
  

\begin{proof} $(2)\implies (1)$: As $I$ is reflexive, so $I=(A:(A:I))$ (see \ref{refr}). Now, $$A=(I:I)=\left((A:(A:I)):I\right)=(A:(A:I)I)=(R:\tr_A(I))$$  Hence, $\Ext^1_A(A/\tr_A(I),R)=0$, by Lemma \ref{2}. Thus, $\grade_A (\tr_A(I))\ge 2$.  

$(1)\implies (2)$: Since $\grade_A(\tr_A(I))\ge 2$, so $\tr_A(I)$ contains a non-zero-divisor. First we show that this implies $I$ contains a non-zero-divisor. Indeed, by definition, we have a surjection $I\otimes_A \Hom_A(I,A)\to \tr_A(I)\to 0$. Since $I$ is finitely generated, we have a surjection $I^{\oplus n} \to \Hom_A(I,A)\to 0$. Hence, we have a surjection $I^{\oplus n }\to \tr_A(I)\to 0$. Hence $\ann_A(I)\subseteq \ann_A(\tr_A(I))=0$, where the later is $0$ since $\tr_A(I)$ contains a non-zero-divisor. If $I$ consisted of zero-divisors, then by prime avoidance $I\subseteq \p$ for some $\p \in \Ass(A)$. As $\p=\ann_A(x)$ for some $0\ne x\in A$, we would then have $xI=0$, i.e., $x\in \ann_A(I)=0$, contradiction! Thus, $I$ contains a non-zero-divisor. Hence, $\tr_A(I)=(A:I)I$ by \cite[Proposition 2.4(2)]{tk}. Moreover, $I$ is reflexive, so $I=(A:(A:I))$ (\ref{refr}). As $\grade_A(\tr_A(I))\ge 2$, so $\Ext^1_A(A/\tr_A(I),A)=0$, so by Lemma \ref{2}  we get $$A=(A:\tr_A(I))=(A:(A:I)I)=\left((A:(A:I)):I\right)=(I:I)$$      
\end{proof}

\begin{cor}\label{maincor} Let $A$ be a semi-local ring such that $\sup\{\depth A_{\m} : \m \in \operatorname{Max} \spec(A)\}\le 1$. Let $I$ be a reflexive ideal containing a non-zero-divisor such that $(I:I)=A$. Then, $I\cong A$.  
\end{cor}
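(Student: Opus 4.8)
The plan is to combine Proposition~\ref{trmain} with the grade inequality recorded before it to force the trace ideal $\tr_A(I)$ to equal $A$, and then to exploit that $A$ is semi-local. Proposition~\ref{trmain} applies to the reflexive ideal $I$, and since $I$ contains a non-zero-divisor and satisfies $(I:I)=A$, its condition~(2) holds; hence its condition~(1) yields $\grade_A(\tr_A(I))\ge 2$. This is the only place where the reflexivity of $I$ and the equality $(I:I)=A$ are used.

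Next I would deduce $\tr_A(I)=A$. Suppose instead that $\tr_A(I)$ is a proper ideal. Then it lies in some maximal ideal $\m$ of $A$, so $\m\in \V(\tr_A(I))$, and therefore $\grade_A(\tr_A(I))=\inf\{\depth A_{\p}:\p\in \V(\tr_A(I))\}\le \depth A_{\m}\le \sup\{\depth A_{\m'}:\m'\in \operatorname{Max}\spec(A)\}\le 1$. This contradicts $\grade_A(\tr_A(I))\ge 2$, so $\V(\tr_A(I))=\emptyset$, that is, $\tr_A(I)=A$.

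Finally I would upgrade $\tr_A(I)=A$ to $I\cong A$ using that $A$ is semi-local. Since $I$ contains a non-zero-divisor, \cite[Proposition 2.4(2)]{tk} gives $\tr_A(I)=(A:I)I$, so $(A:I)I=A$; in other words $I$ is an invertible fractional ideal, hence a finitely generated projective $A$-module that is locally free of rank one at every prime. Over a semi-local ring such a module is free of rank one: writing $\fkr$ for the Jacobson radical, $A/\fkr\cong\prod_i A/\m_i$ is semisimple and $I/\fkr I\cong A/\fkr$, so $I/\fkr I$ is cyclic; lifting a generator and applying Nakayama's lemma to the finitely generated module $I$ shows $I=Ax$ for a single $x\in I$. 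As $I$ is regular, this $x$ is a non-zero-divisor, so the map $A\to I$, $1\mapsto x$, is an isomorphism, giving $I\cong A$.

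The genuine content, and the step I expect to be the main obstacle, is this last passage from $\tr_A(I)=A$ to $I\cong A$, where the semi-local hypothesis is indispensable. The equality $\tr_A(I)=A$ by itself does \emph{not} make $A$ a direct summand of $I$, so \ref{sim} does not finish the argument directly: over a Dedekind domain with nontrivial class group, any nonprincipal invertible ideal has full trace yet is not isomorphic to $A$. What rescues the conclusion is precisely that a rank-one projective module over a semi-local ring is free, which fails without the finiteness of $\operatorname{Max}\spec(A)$.
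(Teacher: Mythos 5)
Your proof is correct, and its first two steps coincide exactly with the paper's: invoke Proposition~\ref{trmain} to get $\grade_A(\tr_A(I))\ge 2$, then use the semi-local depth bound to rule out a maximal ideal containing $\tr_A(I)$ and conclude $\tr_A(I)=A$. Where you diverge is the final passage from $\tr_A(I)=A$ to $I\cong A$. The paper localizes at each prime $\p$, uses \cite[Proposition 2.8(iii)]{lin} to get that $A_\p$ is a direct summand of $I_\p$ (over a local ring a trace ideal equal to the whole ring forces some map $I_\p\to A_\p$ to be surjective, hence split), applies the elementary observation \ref{sim} to conclude $I_\p\cong A_\p$, and then cites \cite[Lemma 1.4.4]{bh} for the fact that a finite projective module of constant rank over a semi-local ring is free. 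You instead read $\tr_A(I)=(A:I)I=A$ as saying $I$ is an invertible fractional ideal, get projectivity and local principality from the dual basis lemma, and then prove freeness by hand via $A/\fkr\cong\prod_i A/\m_i$ and Nakayama; your closing observation that $x$ is forced to be a non-zero-divisor because $I$ is regular correctly completes the isomorphism $A\cong I$. Your route is more self-contained (it bypasses both \cite{lin} and \cite{bh} and never uses \ref{sim}), at the cost of invoking invertibility, while the paper's route is more modular and reuses its own preliminary \ref{sim}. Your caveat that $\tr_A(I)=A$ alone does not make $A$ a direct summand of $I$ globally is well taken and is exactly why the paper localizes before applying Lindo's result; the Dedekind-domain example you give is an apt illustration of why the semi-local hypothesis cannot be dropped.
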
    

\begin{proof}  By Proposition \ref{trmain} we get $\grade_A(\tr_A(I))\ge 2$. If $\tr_A(I)\neq A$, then there exists a maximal ideal $\m \in V(\tr_A(I))$, and then $\grade_A(\tr_A(I))=\inf\{\depth A_{\p}:\p\in V(\tr_A(I))\}\le \depth A_{\m}\le 1$, contradiction! Thus, $\tr_A(I)=A$. Localizing at each prime ideal we get that $\tr_{A_{\p}}(I_{\p})=A_{\p}$ for every prime ideal $\p$ of $A$.  By \cite[Proposition 2.8(iii)]{lin} we see that $A_{\p}$ is a direct summand of $I_{\p}$ for every prime ideal $\p$ of $A$. Then by \ref{sim} we get that $I_{\p}\cong A_{\p}$ for every prime ideal $\p$ of $A$. Thus, $I$ is a projective module of constant rank over the semi-local ring $A$, hence by \cite[Lemma 1.4.4]{bh} we get $I\cong A$.     
\end{proof}

In view of \ref{mat}, Theorem \ref{main1} follows from the following result which is our main theorem   

\begin{thm}\label{main11} Let $R$ be a generically Gorenstein local Cohen--Macaulay ring of dimension $1$ with total ring of fractions  $Q(R)$. Let $R\subseteq S\subseteq Q(R)$ be a module finite birational extension. Then, the following are equivalent  

\begin{enumerate}[\rm(1)]

\item $\syz \cm(S)=\cm(S)\cap \syz \cm(R)$. 

\item $S\cong \Hom_R(S,R)$. 

\item $S\in \refl(R)$, and $\c_R(S)$ is a stable ideal.  

\item $\syz \cm(S)=\{M\in \refl(R): \tr_{R}(M)\subseteq \c_R(S)\}$.   
\end{enumerate}

\end{thm}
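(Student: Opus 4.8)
The plan is to strip everything down via \ref{mat}: since $R$ and (by the discussion preceding \ref{trco}) $S$ are both generically Gorenstein $1$-dimensional Cohen--Macaulay, we have $\syz\cm(R)=\refl(R)$ and $\syz\cm(S)=\refl(S)$. Thus (1) reads $\refl(S)=\cm(S)\cap\refl(R)$, and (1)$\iff$(4) is then immediate from \ref{trco}. I would also record that, as $S$ is a regular submodule of $Q(R)$, $\Hom_R(S,R)\cong(R:S)=\c_R(S)=:\c$, so that (2) says precisely $S\cong\c$ as $R$-modules. Two identities will drive the argument. First, $\c$ is \emph{always} reflexive over $R$: it is regular, and the triple-colon identity $R:(R:(R:S))=(R:S)$ together with \ref{refr} gives $R:(R:\c)=\c$. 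Second, \emph{assuming} $S\in\refl(R)$, reflexivity gives $R:\c=R:(R:S)=S$ (by \ref{refr}), whence $\tr_R(S)=(R:S)S=\c S=\c$ and the crucial equality
\[
(\c:\c)=S,
\]
where $S\subseteq(\c:\c)$ because $\c$ is an ideal of $S$, and conversely $a\c\subseteq\c\subseteq R$ forces $a\in(R:\c)=S$.

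With these in hand, (2)$\iff$(3) falls out. For (2)$\Rightarrow$(3): from $S\cong\c$ and reflexivity of $\c$, the module $S$ is reflexive (reflexivity is isomorphism-invariant), so the identity $(\c:\c)=S$ applies, and $\c\cong S=(\c:\c)$ says exactly that $\c$ is stable. For (3)$\Rightarrow$(2): $S\in\refl(R)$ gives $(\c:\c)=S$, and stability $\c\cong(\c:\c)$ becomes $S\cong\c$, i.e. (2).

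It remains to splice (1) into the cycle. For (1)$\Rightarrow$(2): hypothesis (1) forces $S\in\refl(S)\subseteq\cm(S)\cap\refl(R)\subseteq\refl(R)$, so $(\c:\c)=S$ is available; since $\c\in\cm(S)$ (a regular $S$-ideal) and $\c\in\refl(R)$, (1) yields $\c\in\refl(S)$, and $\End_S(\c)=(\c:_{Q(S)}\c)=(\c:\c)=S$. Applying Corollary \ref{maincor} over the semilocal $1$-dimensional Cohen--Macaulay ring $S$ to the reflexive regular ideal $\c$ with $\End_S(\c)=S$ gives $\c\cong S$ as $S$-modules, hence (2). For (3)$\Rightarrow$(1): I would first upgrade the stability isomorphism $(\c:\c)\to\c$ to an $S$-linear one, using that any $R$-linear map $S\to\c$ is automatically $S$-linear because $\Hom_R(S,\c)=\Hom_S(S,\c)$ ($\c$ being torsion-free over $S$ and $R\subseteq S$ birational). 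This makes $\c\cong S$ an $S$-isomorphism, i.e. $\c=xS$ for a nonzerodivisor $x$. Then for any $S$-module $M$ the change-of-rings adjunction gives $\Hom_R(M,R)\cong\Hom_S(M,\c)\cong\Hom_S(M,S)$ naturally in $M$; iterating, the $R$-biduality map of $M$ is carried to its $S$-biduality map, so $M$ is reflexive over $R$ iff over $S$. Intersecting with $\cm(S)$, and using $\refl(S)\subseteq\cm(S)$ (as $\dim S=1$), yields $\refl(S)=\cm(S)\cap\refl(R)$, which is (1).

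The main obstacle is this last transfer of reflexivity between $R$ and $S$: the abstract isomorphism $M^{**}$ over $R$ versus over $S$ is cheap, but one must verify the \emph{natural} biduality maps correspond. The cleanest route is to keep $\c=xS$ explicit inside $Q(R)$, so that $\Hom_R(M,R)=x\Hom_S(M,S)$ are literally rescalings and the matching of biduality maps is visible. A secondary point worth isolating is the automatic $S$-linearity of the stability isomorphism, since it is exactly what lets the $S$-statement Corollary \ref{maincor} interface with the $R$-module condition (2).
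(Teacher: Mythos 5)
Your proof is correct, and most of it coincides with the paper's own argument: the identification $\Hom_R(S,R)\cong\c_R(S)$, the computation $(\c_R(S):\c_R(S))=S$ under reflexivity of $S$, the application of Corollary \ref{maincor} over the semilocal ring $S$ for (1)$\Rightarrow$(2), the colon-ideal manipulations for (2)$\Leftrightarrow$(3), and the use of \ref{trco} for (4) are all exactly the paper's steps. Where you genuinely diverge is in how you close the cycle. The paper proves (2)$\Rightarrow$(1): given $M\in\cm(S)\cap\syz\cm(R)$, it takes an $S$-free presentation of $\Hom_R(M,R)$ (which lies in $\cm(S)$), dualizes over $R$, and uses $S\cong\Hom_R(S,R)$ together with reflexivity of $M$ to produce an embedding $0\to M\to S^{\oplus a}\to S^{\oplus b}$, landing $M$ in $\Syz_2(S)=\syz\cm(S)$. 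You instead prove (3)$\Rightarrow$(1) by first upgrading the stability isomorphism to an $S$-linear one (so $\c_R(S)=xS$ is $S$-principal for a nonzerodivisor $x$) and then transferring reflexivity through the natural identifications $\Hom_R(-,R)\cong\Hom_S(-,\c_R(S))\cong\Hom_S(-,S)$. Both are valid; the paper's dualized-presentation argument sidesteps the naturality check for the biduality maps that you correctly flag as the delicate point, while your route makes the structural reason for the equivalence more transparent (the conductor becomes free of rank one over $S$, so $R$-duality and $S$-duality coincide up to rescaling by $x$). Your proposed fix for the naturality issue --- keeping $\c_R(S)=xS$ explicit inside $Q(R)$ so that $\Hom_R(M,R)=x\Hom_S(M,S)$ for $M$ torsion-free over $S$ --- does work, but it should be written out in full if you take this route.
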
  

\begin{proof}  First we notice that as $R$ is local, so $S$ is semi-local. Moreover, $S$ is also $1$-dimensional, Cohen--Macaulay, and generically Gorenstein.   

$(1)\implies (2)$: Firstly, the hypothesis of (1) implies $S$ is a reflexive $R$-module by Proposition\ref{pre}. Also, we know $\c_R(S)=(R:S)\cong \Hom_R(S,R)\in \Syz_2(R)$. By \ref{mat}, $\c_R(S)\in \syz \cm(R)$. Moreover, $\c_R(S)$ is an ideal of $S$. Thus, $\c_R(S)\in \cm(S)\cap \syz \cm(R)$. Thus, $\c_R(S)\in \syz \cm(S)=\refl(S)$ by hypothesis  and \ref{mat}. Moreover, keeping in mind $Q(S)=Q(R)$, we see $$(\c_R(S):_{Q(S)}\c_R(S))=((R:S):\c_R(S))=(R:S\c_R(S))=(R:\c_R(S))=(R:(R:S))=S$$ where we used that $\c_R(S)$ is an ideal of $S$ and that $S$ is a reflexive $R$-module. Thus by Corollary \ref{maincor} we get $S\cong \c_R(S)$.   

$(2)\implies (3)$: As $S^*\in \Syz_2(R)=\refl(R)$ by \ref{mat}, we readily get $S$ is a reflexive $R$-module from the hypothesis of (2). Moreover, $$(\c_R(S):_{Q(S)}\c_R(S))=((R:S):\c_R(S))=(R:S\c_R(S))=(R:\c_R(S))=(R:(R:S))=S\cong \Hom_R(S,R)$$

Keeping in mind $\c_R(S)\cong \Hom_R(S,R)$ and $(\c_R(S):\c_R(S))\cong \Hom_R(\c_R(S),\c_R(S))$ (as $\c_R(S)$ contains a non-zero-divisor of $R$ since $S$ is module finite over $R$), we see that $\c_R(S)\cong \Hom_R(\c_R(S),\c_R(S))$, hence $\c_R(S)$ is a stable ideal.     

$(3)\implies (2)$: Since $\c_R(S)$ is a stable ideal, so $\c_R(S)\cong (\c_R(S):\c_R(S))$. But also, $S$ is reflexive gives  $$(\c_R(S):_{Q(S)}\c_R(S))=((R:S):\c_R(S))=(R:S\c_R(S))=(R:\c_R(S))=(R:(R:S))=S$$

Thus, $S\cong \c_R(S)\cong \Hom_R(S,R)$.  

$(2)\implies (1)$:  As $S$ and $\Hom_R(S,R)\cong \c_R(S)$ are torsion-free $S$-modules, so any $R$-linear map between them is also $S$-linear, hence, the isomorphism in hypothesis of (2) is also $S$-linear. Since $S \cong \Hom_R(S,R)$ is a reflexive $R$-module, so $\syz \cm(S)\subseteq \cm(S) \cap \syz \cm(R)$ by Proposition \ref{pre}.  For the reverse inclusion, let $M\in \cm(S) \cap \syz \cm(R)$. Then, $\Hom_R(M,R)\in \cm(S)$.  Choose an $S$-module presentation $S^{\oplus a} \to S^{\oplus b} \to \Hom_R(M,R) \to 0$. Applying $\Hom_R(-,R)$ and remembering  $M\in \syz \cm(R)=\refl(R)$ and $S\cong \Hom_R(S,R)$, we get an exact sequence of $R$-modules $0\to M \to S^{\oplus a} \to S^{\oplus b}$. Since $M, S\in \cm(S)$ are torsion-free, so any $R$-linear map between them is $S$-linear, thus $M\in \Syz_2(S)=\syz \cm(S)$, where the last equality is by \ref{mat}. This proves the reverse inclusion $\cm(S) \cap \syz \cm(R)\subseteq \syz \cm(S)$.  

$(1)\iff (4)$: Follows by \ref{trco}.   

\end{proof}

\begin{cor}\label{corm}  Let $R$ be a generically Gorenstein local Cohen--Macaulay ring of dimension $1$ with total ring of fractions  $Q(R)$. Let $R\subseteq S\subseteq Q(R)$ be a module finite birational extension. Assume $S$ is a Gorenstein ring. Then, the following are equivalent  

\begin{enumerate}[\rm(1)]

\item $\cm(S)\subseteq \refl(R)$.    

\item $S\cong \Hom_R(S,R)$. 

\item $S\in \refl(R)$, and $\c_R(S)$ is a stable ideal.  

\item $\cm(S)=\{M\in \refl(R): \tr_{R}(M)\subseteq \c_R(S)\}$.   
\end{enumerate}   
\end{cor}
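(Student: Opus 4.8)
\emph{Proof plan.} The plan is to read this corollary off from Theorem \ref{main11}. Note first that statements (2) and (3) above are literally identical to statements (2) and (3) of Theorem \ref{main11}. So it is enough to show that the extra hypothesis that $S$ is Gorenstein forces the single equality $\cm(S)=\syz\cm(S)$, and then to check that, granting this equality, statement (1) above is equivalent to statement (1) of Theorem \ref{main11} and statement (4) above is equivalent to statement (4) of Theorem \ref{main11}. The equivalence of (1)--(4) here will then follow formally from the equivalence of (1)--(4) there.

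First I would establish $\cm(S)=\syz\cm(S)$. As noted at the start of the proof of Theorem \ref{main11}, $S$ is again a $1$-dimensional generically Gorenstein Cohen--Macaulay ring, so \ref{mat} gives $\syz\cm(S)=\refl(S)$; and since $\cm(S)$ is exactly the class of torsion-free $S$-modules, the fact that a first syzygy of a torsion-free module is torsion-free yields $\refl(S)=\syz\cm(S)\subseteq\cm(S)$ for free. For the reverse inclusion $\cm(S)\subseteq\refl(S)$ I would invoke that $S$ is Gorenstein. Both reflexivity and the maximal Cohen--Macaulay property are local conditions on a finitely generated module, so one may localize at a maximal ideal and assume $S$ is a $1$-dimensional Gorenstein local ring; there $\omega_S\cong S$, and the canonical-module duality \cite[Theorem 3.3.10]{bh} shows that every maximal Cohen--Macaulay module $M$ satisfies $M\cong M^{**}$, i.e.\ is reflexive. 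Hence $\cm(S)=\refl(S)=\syz\cm(S)$. I expect this transfer of Gorenstein biduality to the (semilocal) module-finite extension $S$ to be the only genuine step; everything else is bookkeeping.

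Finally I would match the remaining conditions. Applying \ref{mat} to $R$ gives $\syz\cm(R)=\refl(R)$, so statement (1) of Theorem \ref{main11}, namely $\syz\cm(S)=\cm(S)\cap\syz\cm(R)$, rewrites as $\cm(S)=\cm(S)\cap\refl(R)$, which holds if and only if $\cm(S)\subseteq\refl(R)$ --- statement (1) here. Likewise, statement (4) of Theorem \ref{main11} is $\syz\cm(S)=\{M\in\refl(R):\tr_R(M)\subseteq\c_R(S)\}$, and substituting $\syz\cm(S)=\cm(S)$ turns it into statement (4) here. Since Theorem \ref{main11} asserts that its conditions (1)--(4) are equivalent, and each of (1)--(4) above is equivalent to the corresponding condition there, all four statements of the corollary are equivalent.
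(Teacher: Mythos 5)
Your proposal is correct and follows essentially the same route as the paper: reduce to Theorem \ref{main11} by showing that the Gorenstein hypothesis on $S$ gives $\syz\cm(S)=\cm(S)$, then translate conditions (1) and (4) using $\syz\cm(R)=\refl(R)$ from \ref{mat}. The paper simply asserts $\syz\cm(S)=\cm(S)$ without proof, whereas you supply the standard justification via \ref{mat} and canonical-module duality; this is a fleshing-out rather than a different argument.
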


\begin{proof} As $S$ is Gorenstein, so $\syz \cm(S)=\cm(S)$, so condition (1) of Theorem \ref{main11} is equivalent to $\cm(S) \subseteq \syz \cm(R)$. Now we are done by Theorem \ref{main11} and \ref{mat}.  
\end{proof}

\begin{rem} When $R$ admits a canonical module, Corollary \ref{corm} is a special case of \cite[Theorem 5.5]{dms}. 
\end{rem}

\begin{cor}  Let $R$ be a Gorenstein local ring of dimension $1$.  Let $R\subseteq S$ be a finite birational extension. Then, the following are equivalent  

\begin{enumerate}[\rm(1)]

\item $S$ is Gorenstein. 

\item $S\cong \Hom_R(S,R)$. 

\item  $\c_R(S)$ is a stable ideal. 

\end{enumerate}
\end{cor}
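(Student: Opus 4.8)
The plan is to derive the corollary from Theorem \ref{main11} together with two simplifications that occur because $R$ is Gorenstein. Since $R$ is a $1$-dimensional Gorenstein local ring, it is generically Gorenstein and Cohen--Macaulay with $\omega_R\cong R$, so Theorem \ref{main11} applies directly. First I would record that $\cm(R)=\refl(R)$: over the Gorenstein ring $R$ the functor $\Hom_R(-,R)$ is a duality on $\cm(R)$ (\cite{bh}), so every maximal Cohen--Macaulay module is reflexive, while the reverse inclusion is automatic. As noted in Section \ref{ps}, $S$ is a $1$-dimensional Cohen--Macaulay ring which is torsion-free as an $R$-module (being a submodule of $Q(R)$), hence $S\in\cm(R)=\refl(R)$. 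In particular the clause ``$S\in\refl(R)$'' appearing in Theorem \ref{main11}(3) holds for free.

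Granting this, the equivalence $(2)\iff(3)$ of the corollary is immediate: by $(2)\iff(3)$ of Theorem \ref{main11}, the condition $S\cong\Hom_R(S,R)$ is equivalent to ``$S\in\refl(R)$ and $\c_R(S)$ is stable'', which, since $S\in\refl(R)$ always holds here, reduces to ``$\c_R(S)$ is stable''.

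The new content is $(1)\iff(2)$, for which the key observation is that $\Hom_R(S,R)$ is a canonical module of $S$. Indeed $S$ is a module-finite $R$-algebra, maximal Cohen--Macaulay over $R$, with $\dim S=\dim R=1$ and $\omega_R\cong R$, so the standard description of the canonical module of a finite algebra (\cite{bh}) gives $\omega_S\cong\Hom_R(S,\omega_R)=\Hom_R(S,R)$. Since both $\Hom_R(S,R)\cong\c_R(S)$ and $S$ are torsion-free $S$-modules, the birational-$\Hom$ remark from Section \ref{ps} shows that any $R$-linear isomorphism $S\cong\Hom_R(S,R)$ is automatically $S$-linear; thus condition (2) is precisely the statement $\omega_S\cong S$ as $S$-modules. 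Finally $\omega_S\cong\c_R(S)$ is a rank-one $S$-module and $S$ is semilocal, so $\omega_S\cong S$ if and only if $\omega_S$ is free, i.e. if and only if $S$ is Gorenstein; this yields $(1)\iff(2)$.

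I expect the main obstacle to be the semilocal bookkeeping in the last step: $S$ is only semilocal, so one must justify the canonical-module identification $\omega_S\cong\Hom_R(S,R)$ and the equivalence ``$\omega_S\cong S\iff S$ is Gorenstein'' without assuming $S$ local. Both follow by localizing at the finitely many maximal ideals of $S$ and invoking that a rank-one projective module over a semilocal ring is free (as in \cite[Lemma 1.4.4]{bh}, already used in Corollary \ref{maincor}); everything else is a direct application of Theorem \ref{main11} and the preliminaries of Section \ref{ps}.
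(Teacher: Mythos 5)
Your proof is correct, but it takes a genuinely different route from the paper for the equivalence $(1)\iff(2)$. The paper's proof is a three-line reduction to condition (1) of Theorem \ref{main11}: since $R$ is Gorenstein, $\syz \cm(R)=\cm(R)$, so that condition reads $\syz \cm(S)=\cm(S)\cap \cm(R)=\cm(S)$, which (via \ref{mat}, $\syz\cm(S)=\refl(S)$) is exactly the statement that $S$ is Gorenstein; and the clause ``$S\in\refl(R)$'' in (3) is automatic because $\cm(R)=\refl(R)$. You share the second observation (and hence get $(2)\iff(3)$ the same way), but you never touch condition (1) of Theorem \ref{main11}: instead you identify $\Hom_R(S,R)\cong\Hom_R(S,\omega_R)$ with a canonical module of the semilocal ring $S$, note that the $R$-isomorphism in (2) is automatically $S$-linear, and characterize Gorensteinness of $S$ by freeness of the rank-one module $\omega_S$ over the semilocal ring $S$. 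Both arguments are sound. The paper's version buys brevity and stays entirely inside the syzygy/reflexivity formalism it has already built; yours is more classical and uses only the $(2)\iff(3)$ part of Theorem \ref{main11}, at the cost of importing the canonical-module machinery for module-finite algebras over a Gorenstein base (including the check that $\dim S_{\n}=\dim R$ at every maximal ideal $\n$ of $S$, which holds here because $S$ is torsion-free over the one-dimensional Cohen--Macaulay ring $R$, so no maximal ideal of $S$ is associated). Your handling of the semilocal bookkeeping --- constant rank one plus \cite[Lemma 1.4.4]{bh}, exactly as in Corollary \ref{maincor} --- is the right fix for that step.
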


\begin{proof} As $R$ is Gorenstein, so $\syz \cm(R)=\cm(R)$. Hence, condition (1) of Theorem \ref{main11} is equivalent to $\syz \cm(S)=\cm(S)\cap \cm(R)=\cm(S)$, i.e, $S$ is Gorenstein.  Also, $S\in \cm(S)\subseteq \cm(R)=\refl(R)$ is automatically true. Now we are done by Theorem \ref{main11}.    
\end{proof}

Let $R$ be a generically Gorenstein ring satisfying Serre's condition $(S_1)$. \cite[ Lemma 2.6(1), Proposition 2.9]{gik} establishes a one-to-one correspondence between reflexive finite birational extensions of $R$ and regular reflexive trace ideals of $R$ by sending each such birational extension $S$ to $\c_R(S)$ and each such ideal $I$ to $(I:I)$ (also see \cite[Lemma 2.8]{dms}). Motivated by Theorem \ref{main11} we remark in the following result that when one restricts this map to finite birational extensions with $S\cong S^*$, one gets stable trace ideals on the ideal side. 

\begin{prop} Let $R$ be a ring. If $S$ is a finite birational extension of $R$ such that $S\cong \Hom_R(S,R)$, then $\c_R(S)$ is a regular stable trace ideal. Conversely, if $I$ is a regular stable trace ideal, then $(I:I)$ is a self $R$-dual finite birational extension. 
\end{prop}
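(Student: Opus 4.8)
The plan is to prove the two implications by direct manipulation of colon ideals inside $Q(R)$, using only the dictionary between $\Hom$ and colon ideals for regular submodules (\cite[Proposition 2.4]{tk}) together with the elementary idempotence of trace ideals. Notably, none of the standing dimension-one Cohen--Macaulay or generic Gorensteinness hypotheses are needed here, and throughout we only ever require \emph{abstract} isomorphisms of $R$-modules rather than genuine (natural-map) reflexivity; this is what keeps the statement valid for an arbitrary ring $R$.

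For the forward direction, suppose $S$ is a finite birational extension with $S\cong\Hom_R(S,R)$. First I would record that $\c_R(S)=(R:S)$ is regular because $S$ is module finite over $R$. Since $S$ is regular, $(R:S)\cong\Hom_R(S,R)\cong S$, so $\c_R(S)\cong S$. Next, because $\c_R(S)$ is an ideal of $S$ one has $S\c_R(S)=\c_R(S)$, whence $\tr_R(S)=(R:S)S=\c_R(S)$; thus $\c_R(S)$ is literally the trace ideal of $S$, settling the ``trace ideal'' assertion. Finally, for stability I would compute, exactly as in the proof of Theorem \ref{main11} but stopping before the reflexivity step, that $(\c_R(S):\c_R(S))=((R:S):(R:S))=(R:S\c_R(S))=(R:(R:S))$. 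Then $(R:(R:S))\cong\Hom_R(\Hom_R(S,R),R)=S^{\ast\ast}$, and applying $\Hom_R(-,R)$ to the given isomorphism $S\cong\Hom_R(S,R)$ yields $S^{\ast\ast}\cong\Hom_R(S,R)\cong S$. Hence $(\c_R(S):\c_R(S))\cong S\cong\c_R(S)$, which is precisely stability.

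For the converse, let $I$ be a regular stable trace ideal and set $S:=(I:I)$. That $S$ is a finite birational extension is immediate: $(I:I)$ is a subring of $Q(R)$ containing $R$, and $(I:I)=\Hom_R(I,I)$ is a finitely generated $R$-module since $R$ is Noetherian and $I$ is finitely generated. Stability gives $S=(I:I)\cong I$. The crucial input is that a regular trace ideal satisfies $(R:I)=(I:I)$: writing $I=\tr_R(M)$ one checks $\tr_R(I)=I$ (the inclusion $I\hookrightarrow R$ gives $I\subseteq\tr_R(I)$, while any $\psi\colon I\to R$ has $\psi(I)\subseteq I$ because each composite $\psi\circ\phi$ with a defining map $\phi\colon M\to R$ lands in $\tr_R(M)=I$), and then $(R:I)I=\tr_R(I)=I$ forces $(R:I)\subseteq(I:I)$, the reverse inclusion being automatic from $I\subseteq R$. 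Combining, $\Hom_R(S,R)\cong\Hom_R(I,R)\cong(R:I)=(I:I)=S$, so $S$ is self $R$-dual.

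The colon-ideal identities are routine bookkeeping; the one step deserving care, and the main conceptual point, is the idempotence $\tr_R(I)=I$ of a trace ideal together with the derived equality $(R:I)=(I:I)$, since this is exactly what converts stability of $I$ into self-duality of $(I:I)$ and, symmetrically, what exhibits $\c_R(S)$ as the trace ideal $\tr_R(S)$ in the forward direction. I would either include the short idempotence argument inline as above or cite \cite{lin} and \cite[Proposition 2.4]{tk} for these trace-ideal facts.
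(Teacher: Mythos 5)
Your proof is correct and follows essentially the same route as the paper's: both directions come down to the identities $\tr_R(S)=(R:S)S=\c_R(S)$ and $(R:I)=(I:I)$ for a regular trace ideal $I$. The only differences are cosmetic --- the paper obtains stability of $\c_R(S)$ slightly more directly from $\Hom_R(\c_R(S),\c_R(S))\cong\Hom_R(S,S)\cong S\cong\c_R(S)$ rather than via the colon computation $(\c_R(S):\c_R(S))=(R:(R:S))\cong S^{**}$, and it cites \cite[Proposition 2.4(3)]{tk} for $(R:I)=(I:I)$ where you prove that fact inline through the idempotence $\tr_R(I)=I$.
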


\begin{proof} $\c_R(S)$ is a regular ideal as $S$ is module finite, and it is a trace ideal since $\c_R(S)=(R:S)S=\tr_R(S)$. If  $S\cong \Hom_R(S,R)$, then $\c_R(S)\cong S$, so $\Hom_R(\c_R(S),\c_R(S))\cong \Hom_R(S,S)\cong S\cong \c_R(S)$. Thus, $\c_R(S)$ is stable ideal.  

Conversely, if $I$ is a regular stable trace ideal, then $(R:(I:I))\cong (R:I)\cong (I:I)$, where the first isomorphism holds since $I$ is stable, and the second isomorphism holds by \cite[Proposition 2.4(3)]{tk}.  

That the maps are inverses of each other was already observed in \cite[Lemma 2.8]{dms}.  
\end{proof}

\begin{rem}  One nice class of examples of stable trace ideals over $1$-dimension Cohen--Macaulay local rings $R$ are non-parameter Ulrich ideals as introduced in \cite[Definition 2.1]{ulr}. Indeed, if $R$ is $1$-dimensional Cohen--Macaulay, then Ulrich ideals are stable \cite[Proposition 3.2]{dl}.  Now let $I$ be an Ulrich ideal which is not a parameter ideal. Then, $(xR:_RI)=I$ for some non-zero-divisor $x\in I$ such that $I^2=xI$ (\cite[Corollary 2.6(a)]{ulr}). As $x\in I$,so $(xR:_RI)=(xR:I)=x(R:I)$. Thus, $I=x(R:I)$, so $xI=I^2=xI(R:I)$, hence $I=I(R:I)=\tr_R(I)$.       
\end{rem}

\end{document}